\newfont{\theoremfont}{cmssbx12 scaled 875}
\newtheoremstyle{Eins}{\topsep}{\topsep}{\itshape}{}{\theoremfont}{.}{5pt}{\thmname{#1}\thmnumber{ #2}\thmnote{ #3}}
\newtheoremstyle{Zwei}{\topsep}{\topsep}{}{}{\theoremfont}{.}{5pt}{\thmname{#1}\thmnumber{ #2}\thmnote{ #3}}
\theoremstyle{Zwei}
\newtheorem{thm}{Theorem}[section]
\newtheorem{lem}[thm]{Lemma}
\newtheorem{prop}[thm]{Proposition}
\newtheorem{ex}[thm]{Example}
\newtheorem{cor}[thm]{Corollary}
\newtheorem*{ack}{Acknowledgements}
\renewcommand{\bar}{\overline}
\newcommand{\bc}[2]{\genfrac{[}{]}{0pt}{}{#1}{#2}}
\newcommand{\cb}[2]{{\scriptstyle[ #1\;#2]}}
\newcommand{\Ker}{\operatorname{Ker}\nolimits}
\newcommand{\Coker}{\operatorname{Cok}\nolimits}
\newcommand{\Cok}{\operatorname{Cok}\nolimits}
\newcommand{\cok}{\operatorname{cok}\nolimits}
\newcommand{\coker}{\operatorname{cok}\nolimits}
\renewcommand{\Im}{\operatorname{Im}\nolimits}
\newcommand{\im}{\operatorname{im}\nolimits}
\newcommand{\coim}{\operatorname{coim}\nolimits}
\newcommand{\Coim}{\operatorname{Coim}\nolimits}
\newcommand{\id}{\operatorname{id}\nolimits}
\begin{document}
\title{On the notion of a semi-abelian category in the sense of Palamodov}
\author{Yaroslav Kopylov and Sven-Ake Wegner}
\date{March 3, 2011}
\maketitle
\renewcommand{\thefootnote}{}
\hspace{-1000pt}\footnote{2010 \emph{Mathematics Subject Classification}: Primary 18A20; Secondary 46M18.}
\hspace{-1000pt}\footnote{\emph{Key words}: preabelian category, semi-abelian category, quasi-abelian category, category of bornological spaces.}
\vspace{-35pt}

{\small
\begin{abstract}\noindent{}In the sense of Palamodov, a preabelian category is semi-abelian if for every morphism the natural morphism between the cokernel of its kernel and the kernel of its cokernel is simultaneously a monomorphism and an epimorphism. In this article we present several conditions which are all equivalent to semi-abelianity. First we consider left and right semi-abelian categories in the sense of Rump and establish characterizations of these notions via six equivalent properties. Then we use these properties to deduce the characterization of semi-abelianity. Finally, we investigate two examples arising in functional analysis which illustrate that the notions of right and left semi-abelian categories are distinct and in particular that such categories occur in nature.
\end{abstract}}

\section{Introduction}\label{Introduction}\vspace{-5pt}

Additive and non-abelian but abelian-like categories arise in different branches of mathematics. Quillen \cite{Quillen1973} studied categories arising in K-theory which need not have (co-)kernels. Rump \cite{Rump2001} and Bondal, van den Bergh \cite{BondalVanDenBergh} showed that torsion theories can be described by certain non-abelian categories. No reasonable subcategory of the category of locally convex spaces (including the category itself) is abelian, cf.~Prosmans \cite{Prosmans2000} or Wengenroth \cite{Wengenroth}. In the context of bounded cohomology (see Monod \cite{Monod1, Monod2}), non-abelian additive categories have been used systematically by B\"uhler \cite{Buehler2008}. In Lie theory, non-abelian categories appear in the framework of the so-called equivariant derived category (see e.g.~Kashiwara \cite{Kashiwara}).\vspace{3pt}
\\A priori all categories above are not accessible for standard methods of homological algebra since there might be different choices to define \textquotedblleft{}(short) exact sequences\textquotedblright{} and it is moreover not clear if basic results like the snake, five, horseshoe, or comparison lemmas remain true in the given category (cf.~Grandis \cite {Grand91-1}, Kopylov, Kuz$'$minov \cite{KoK00, KoK09} or Kopylov \cite{Kopylov2009}). On the other hand, the reason for considering the categories above is in fact the hope that a certain, for example purely analytic, problem can be formulated in the language of exact sequences and then eventually be solved with the help of purely abstract methods of homological algebra.
\smallskip
\\The key notion for handling this situation on an abstract level -- apart from ad hoc solutions -- was invented by Quillen \cite{Quillen1973} (see B\"uhler \cite{Buehler2009}) and is that of an exact category. Given any additive category, one chooses a so-called exact structure, i.e.~a system of kernel-cokernel pairs satisfying certain axioms. Then, roughly speaking, the pairs in this system behave like the short exact sequences of an abelian category and homological algebra is possible with respect to them; note that the derived category can be defined for each exact category, see Neeman \cite{Neeman}. In any additive category, the set of all split short  exact sequences is an exact structure and by definition it is the smallest one. If the category in addition has kernels and cokernels (i.e.~is preabelian) then there also exists a largest or maximal exact structure (see Sieg, Wegner \cite{SiegWegner2009}) formed by those kernel-cokernel pairs $(f,g)$ in which $f$ is a semi-stable kernel in the notation of 
Richman, Walker \cite{RichmanWalker}, that is in any pushout
\begin{diagram}[height=1.8em,width=2em]
E             & \rTo^{f}         & F            \\
\dTo^{\alpha} & \text{\small PO} & \dTo_{\beta} \\
E'             & \rTo_{f'}         & F'         \\
\end{diagram}
$f'$ is again a kernel, and $g$ is a semi-stable cokernel (which is defined dually). A preabelian category is quasi-abelian and thus admits an intrinsic notion of exactness, cf.~Schneiders \cite{Schneiders1999}, if and only if the maximal exact structure consists of all kernel-cokernel pairs; this is equivalent to the fact that all kernels and cokernels are semi-stable.
\smallskip
\\In the light of the above, for a given preabelian category in which homological methods are to be applied, the first task is to determine the semi-stable (co-)\linebreak{}kernels. In fact, there is a whole zoo of properties between ``(quasi-)abelian'' and ``pre-abelian'' (see~Fig.~1 in Section \ref{Examples}) clarifying this task. So-called left quasi-abelian categories are defined by making the above definition one-sided, i.e.~by requiring all cokernels to be semi-stable. Right quasi-abelian categories are defined dually. A category is semi-abelian if for each morphism the induced morphism between the cokernel of the kernel and the kernel of the cokernel is simultaneously a monomorphism and an epimorphism. Again, left and right semi-abelian categories are defined by making this definition one-sided (see Section \ref{Prep}). Propositions \ref{t1} and \ref{t2} and Theorem \ref{t3} show that the last three properties can also be formulated in terms of inheritance properties under pushouts and pullbacks. 
Therefore, establishing one of these properties for a given category is indeed helpful for understanding its (maximal) exact structure.
\smallskip
\\In Section \ref{Prep}, we specify our notation and present some preparatory results which are rather simple but very useful for our purposes. In Section \ref{MainResult}, we establish a characterization of right and by dualization of left semi-abelian categories via six equivalent conditions, which we combine in our main theorem (Theorem \ref{t3}) to obtain a characterization of semi-abelianity. In the last Section \ref{Examples}, we present two examples illustrating that left semi-abelian categories which are not right semi-abelian (and vice versa) exist and occur in nature.
\smallskip
\\To conclude our introduction, let us mention that the notion of a semi-abelian category in the above sense was invented several times by different mathematicians under different names. Such categories seem to have first appeared in B\u{a}nic\u{a}, Popescu \cite{BanicaPopescu1965} under the name of \textquotedblleft{}cat\'{e}gories pre-ab\'eliennes\textquotedblright{}. At the end of the 1960's, Palamodov \cite{Palamodov1968, Palamodov1971} introduced the same concept under the name of \textquotedblleft{}semi-abelian categories\textquotedblright{} and developed homological algebra within them in order to treat projective and inductive limits of locally convex spaces.  Later, these categories -- again as \textquotedblleft{}semi-abelian categories\textquotedblright{} -- were rediscovered by Rump \cite{Rump2001}, who introduced the notions of left and right semi-abelianity. We refer to \cite[Section 2]{Rump2007} for more historical comments on semi-abelian and quasi-abelian categories and detailed references.
\smallskip
\\We point out that in the current literature the term ``semi-abelian category'' also refers to 
a pointed Barr-exact protomodular category with binary products, see Janelidze, M\`arki, Tholen \cite{JanelidzeMarkiTholen2002}. Such a category is additive if and only if it is abelian.

\section{Notations and Preparatory Results}\label{Prep}\vspace{-5pt}

In the sequel let $\mathcal{A}$ be a preabelian category, i.e.~an additive category with kernels and cokernels. For a morphism $f\colon E\rightarrow F$ in $\mathcal{A}$ we denote by $\ker f\colon \Ker f\rightarrow E$ its kernel and by $\cok f\colon F \rightarrow \Cok f$ its cokernel. Note that kernels and cokernels are unique only up to isomorphisms; we will however speak of \textit{the} kernel and \textit{the} cokernel of a given morphism $f$. According to Richman, Walker \cite{RichmanWalker} we say that $f$ is a \textit{kernel} if there is a morphism $g$ such that $f=\ker g$. \textit{Cokernels} are defined dually. We denote by $\coim f\colon E\rightarrow \Coim f$ the cokernel of $\ker f$ and by $\im f\colon\Im f\rightarrow F$ the kernel of $\cok f$. As above, \textit{image} and \textit{coimage} are unique only up to isomorphims but we will also here use definite articles in the sequel. Then $f$ admits a canonical decomposition $f=(\im f)\circ\bar{f}\circ\coim f$. Following B\u{a}nic\u{a}, Popescu \cite{
BanicaPopescu1965} or Schneiders \cite[Definition 1.1.1]{Schneiders1999}, we say that $f$ is \textit{strict} if $\bar{f}$ is an isomorphism. By definition, $\mathcal{A}$ is abelian if and only if every morphism is strict.
\smallskip
\\According to Rump \cite[p.~167]{Rump2001}, we say that $\mathcal{A}$ is \textit{left semi-abelian} if $\bar{f}$ is a monomorphism for each morphism $f$. Dually, we say that $\mathcal{A}$ is \textit{right semi-abelian} if $\bar{f}$ is an epimorphism for each morphism $f$. If $\mathcal{A}$ is left semi-abelian then each morphism $f$ admits a decomposition $f=i\circ p$ with a cokernel $p$ and a monomorphism $i$. If dually $\mathcal{A}$ is right semi-abelian then each morphism $f$ admits a decomposition $f=i\circ p$ with an epimorphism $p$ and a kernel $i$. In fact, the last assertions are even equivalent to the definitions of left and right semi-abelianity (see Rump \cite[p.~167]{Rump2001}).
\smallskip
\\Left and right semi-abelian categories generalize the concept of what we like to call semi-abelian categories:~$\mathcal{A}$ is semi-abelian if $\bar{f}$ is a bimorphism, i.e.~$\bar{f}$ is a monomorphism and an epimorphism simultaneously for each morphism $f$. By definition, $\mathcal{A}$ is semi-abelian if and only if it is left and right semi-abelian simultaneously. The concept of semi-abelian categories is well-known and was studied by many authors under different names during the last forty-five years; we refer to Section \ref{Introduction} for historical comments and more references.
\smallskip
\\In Section \ref{MainResult}, we will explain that left and right semi-abelian categories admit much more equivalent definitions than the two mentioned above. In order to prove these equivalences we need some preliminary results. The first lemma summarizes well-known facts, see e.g.~Schneiders \cite[Remark 1.1.2]{Schneiders1999}, Richman, Walker \cite[Theorems 1 and 5]{RichmanWalker}, and Kelly \cite[Proposition 5.2]{Kelly1969}.

\begin{lem}\label{l1} Let $\mathcal{A}$ be a preabelian category.\vspace{-12pt}
\begin{compactitem}\vspace{3pt}
\item[(i)] A morphism $f$ is a kernel if and only if $f=\im f$ and it is a cokernel if and only if $f=\coim f$.\vspace{3pt}
\item[(ii)] A morphism $f$ is strict if and only if there is a representation $f=f_1\circ f_0$ with a cokernel $f_0$ and a kernel $f_1$. In every such representation we have
$f_0=\coim f$ and $f_1=\im f$.\vspace{3pt}
\item[(iii)]In every pullback
\begin{diagram}[height=1.8em,width=2em]
P          & \rTo^{p_G}       & G        \\
\dTo^{p_E} & \text{\small PB} & \dTo_{t} \\
E          & \rTo_{f}         & F        \\
\end{diagram}
we have $\ker f= p_E\circ \ker p_G$. If $f$ is the kernel of a morphism $h$ then $p_G$ is the kernel of $h\circ{}t$. If $f$ is a monomorphism then so is $p_G$.
\smallskip
\\Dually, in every pushout
\begin{diagram}[height=1.8em,width=2em]
& E              & \rTo^{g}         & F          \\
\hspace{8pt}& \dTo^{s}       & \text{\small PO} & \dTo_{s_F} \\
& G              & \rTo_{s_G}       & S          \\
\end{diagram}
we have $\cok g=(\cok s_G)\circ s_F$. If $g$ is the cokernel of a morphism $h$ then $s_G$ is the cokernel of $s\circ h$. If $g$ is an epimorphism then so is $s_G$.\hfill\qed
\end{compactitem}
\end{lem}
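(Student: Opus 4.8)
The plan is to prove the three items in the order (i), (ii), (iii), using (i) inside the proof of (ii); all of them are standard diagram chases in a preabelian category, so the real work lies in the bookkeeping and in respecting the ``unique up to isomorphism'' conventions of the excerpt. For (i) I would first treat the ``kernel'' half: writing $f=\ker g$, one knows $f$ is a monomorphism, and $g\circ f=0$ makes $g$ factor through $\cok f$; then $\cok f\circ f=0$ forces $f$ to factor through $\im f=\ker(\cok f)$, while the factorization of $g$ through $\cok f$ yields $g\circ\im f=0$, so that $\im f$ factors through $\ker g=f$. Substituting these two factorizations into one another and cancelling the monomorphisms $f$ and $\im f$ shows that the two intermediate morphisms are mutually inverse; hence $f$ and $\im f$ represent the same subobject, which is precisely what ``$f=\im f$'' means here. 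The converse is immediate since $\im f=\ker(\cok f)$ is a kernel by construction, and the assertion about cokernels is the formal dual.

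For (ii), the forward direction I would simply read off the canonical decomposition $f=(\im f)\circ\bar f\circ(\coim f)$: if $\bar f$ is an isomorphism, regrouping it exhibits $f$ as a kernel composed with a cokernel, because the class of kernels and the class of cokernels are each stable under composition with isomorphisms. For the converse, let $f=f_1\circ f_0$ with $f_0$ a cokernel and $f_1$ a kernel. Since $f_1$ is monic, $f_1\circ f_0\circ\ker f=0$ gives $f_0\circ\ker f=0$, so $\ker f$ factors through $\ker f_0$; conversely $f\circ\ker f_0=f_1\circ f_0\circ\ker f_0=0$ makes $\ker f_0$ factor through $\ker f$, whence $\ker f$ and $\ker f_0$ represent the same subobject. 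Applying $\cok$ and invoking (i) (which gives $f_0=\coim f_0$) then yields $f_0=\coim f$; dually, since $f_0$ is epic, $\cok f$ and $\cok f_1$ agree and (i) gives $f_1=\im f_1=\im f$. Comparing $f=f_1\circ f_0$ with the canonical decomposition and cancelling the monomorphism $f_1$ and the epimorphism $f_0$ then shows $\bar f$ to be a composite of isomorphisms, so $f$ is strict.

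For (iii), in the pullback square I would argue: $f\circ p_E\circ\ker p_G=t\circ p_G\circ\ker p_G=0$, so $p_E\circ\ker p_G$ factors through $\ker f$; conversely $f\circ\ker f=0=t\circ 0$, so the universal property of the pullback provides $w\colon\Ker f\to P$ with $p_E\circ w=\ker f$ and $p_G\circ w=0$, and the second identity lets $w$ pass through $\ker p_G$, so that $\ker f$ factors through $p_E\circ\ker p_G$; since $\ker p_G$ is monic and $p_E,p_G$ are jointly monic, $p_E\circ\ker p_G$ is monic as well, and the two factorizations therefore identify it with $\ker f$. If $f$ is a monomorphism, then $p_G\circ a=p_G\circ b$ gives $f\circ p_E\circ a=t\circ p_G\circ a=t\circ p_G\circ b=f\circ p_E\circ b$, hence $p_E\circ a=p_E\circ b$, and the uniqueness clause of the pullback forces $a=b$; thus $p_G$ is monic. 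If in addition $f=\ker h$, then $h\circ t\circ p_G=h\circ f\circ p_E=0$, and for every $q$ with $h\circ t\circ q=0$ the morphism $t\circ q$ factors through $\ker h=f$, which via the pullback produces a factorization of $q$ through $p_G$, unique because $p_G$ is monic; hence $p_G=\ker(h\circ t)$. The three statements for the pushout square follow by formal dualization.

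The lemma being a digest of well-known facts, I do not anticipate a genuine obstacle. The step requiring the most care is the one in (ii) that identifies $\ker f$ with $\ker f_0$ (and dually $\cok f$ with $\cok f_1$) as subobjects, since this is what licenses the application of (i); beyond that, the only thing to watch is that all the ``up to isomorphism'' identifications are carried out in accordance with the convention of speaking of \emph{the} kernel, cokernel, image and coimage, so that the displayed equalities hold on the nose.
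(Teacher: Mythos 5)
Your proof is correct; all three diagram chases (the mutual-factorization argument for (i), the identification of $\ker f$ with $\ker f_0$ and its dual for (ii), and the cone manipulations for (iii)) go through as written. The paper itself offers no proof of this lemma---it is stated with a \qed and attributed to Schneiders, Richman--Walker and Kelly---and your arguments are exactly the standard ones found in those sources, so there is nothing to compare beyond confirming that your bookkeeping of the ``unique up to isomorphism'' identifications is sound.
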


\vspace{-10pt}
Let us stress that the last statement of Lemma \ref{l1} implies that in any preabelian category kernels pullback to kernels and cokernels pushout to cokernels.
\smallskip
\\The proof of the next lemma was inspired by an idea of Yakovlev \cite[Lemma 1]{Yak}.

\begin{lem}\label{l2} Let $\mathcal{A}$ be a preabelian category and let $f\colon E\rightarrow F$ and $g\colon F\rightarrow G$ be morphisms. Then we have $\coker(g\circ\im f)=\coker(g\circ f)$ and $\ker((\coim g)\circ f)=\ker(g\circ f)$.
\end{lem}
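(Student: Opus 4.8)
The statement is self-dual (the second identity follows from the first applied in the opposite category $\mathcal A^{\mathrm{op}}$, since $\mathcal A^{\mathrm{op}}$ is again preabelian and kernels/images there correspond to cokernels/coimages in $\mathcal A$), so I would prove only $\coker(g\circ\im f)=\coker(g\circ f)$ and then remark that the other follows by duality. To prove this single identity, recall that two morphisms with the same domain and codomain have "the same cokernel'' precisely when they have the same set of morphisms out of the codomain annihilating them; concretely, it suffices to show that for any morphism $h$ with domain $G$, we have $h\circ g\circ\im f=0$ if and only if $h\circ g\circ f=0$. The implication $h\circ g\circ\im f=0\Rightarrow h\circ g\circ f=0$ is the easy direction and uses only that $f$ factors through its image: writing $f=(\im f)\circ\bar f\circ\coim f$ (the canonical decomposition from Section \ref{Prep}), we get $h\circ g\circ f=(h\circ g\circ\im f)\circ\bar f\circ\coim f=0$.

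**The key step.** The substantive direction is: if $h\circ g\circ f=0$, then $h\circ g\circ\im f=0$. Here I would use the defining property of $\im f=\ker(\cok f)$. From $h\circ g\circ f=0$ we know that $h\circ g$ factors through $\cok f$, say $h\circ g=k\circ\cok f$ for a unique $k\colon\Cok f\to\,\cdot\,$; then $h\circ g\circ\im f=k\circ(\cok f)\circ(\im f)=k\circ(\cok f\circ\ker(\cok f))=k\circ 0=0$, using that the composite of a cokernel with the kernel of that same morphism vanishes. This is essentially the Yakovlev-style trick alluded to just before the lemma. With both implications in hand, $g\circ\im f$ and $g\circ f$ annihilate exactly the same morphisms from $G$, hence have canonically isomorphic cokernels, which is what "$\coker(g\circ\im f)=\coker(g\circ f)$'' means under the convention (stated in the excerpt) that cokernels are spoken of with a definite article up to the unique comparison isomorphism.

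**The obstacle.** There is no deep obstacle; the only care needed is bookkeeping with the "the cokernel'' convention — one should phrase the argument in terms of the universal property (morphisms out of $G$ that kill the given map) rather than pretending cokernels are literally equal, and then observe that the induced comparison map between $\Cok(g\circ f)$ and $\Cok(g\circ\im f)$ is an isomorphism because each cokernel factors through the other. Once the two annihilator-equivalences above are established, this last observation is immediate. The dual identity $\ker((\coim g)\circ f)=\ker(g\circ f)$ then requires no separate argument: apply the proven identity in $\mathcal A^{\mathrm{op}}$, where $\im$ becomes $\coim$ and $\coker$ becomes $\ker$.
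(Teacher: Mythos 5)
Your proposal is correct and follows essentially the same route as the paper: both arguments hinge on the observation that a morphism $x$ with $x\circ g\circ f=0$ satisfies $x\circ g=\rho\circ\cok f$ and hence $x\circ g\circ\im f=\rho\circ\cok f\circ\ker(\cok f)=0$, while the converse direction uses the factorization of $f$ through $\im f$; the second identity is obtained by duality in both cases. Your phrasing via ``the two composites annihilate the same morphisms out of $G$'' is just a repackaging of the paper's direct verification that $\coker(g\circ\im f)$ satisfies the universal property of $\coker(g\circ f)$.
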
\vspace{-20pt}
\begin{proof} To prove that $\coker(g\circ \im f)$ is the cokernel of $g\circ{}f$, we first observe $\coker(g\circ\im f)\circ g\circ f=0$. Given a morphism $x$ with $x\circ{}g\circ{}f=0$, we get a morphism $\rho$ such that $x\circ g=\rho\circ{}\coker f$. 
This yields $x\circ{}g\circ{}\im f=0$ and hence $x=y\circ \coker(g\circ{}\im f)$ for some unique $y$. The second assertion is obtained by duality.
\end{proof}

\vspace{-10pt}

We need the following corollary for our proofs in Section \ref{MainResult}.

\begin{cor}\label{corl1} Let $\mathcal{A}$ be a preabelian category.\vspace{-10pt}
\begin{compactitem}
\item[(i)] Assume that the kernels of $\mathcal{A}$ are stable under composition. Let $f$ be an arbitrary morphism and $g$ be a kernel such that the composition $g\circ{}f$ is defined. Then $\im(g\circ f)=g\circ\im f$.\vspace{3pt}
\item[(ii)] Assume dually that the cokernels of $\mathcal{A}$ are stable under composition. Let $g$ be an arbitrary morphism and $f$ be a cokernel such that the composition $g\circ f$ is defined. Then $\coim(g\circ{}f)=(\coim g)\circ{}f$.
\end{compactitem} 
\end{cor}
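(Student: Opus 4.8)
The plan is to obtain part (i) as an immediate combination of Lemma \ref{l2} with the characterization of kernels in Lemma \ref{l1}(i); part (ii) then follows by dualizing, and I do not expect any serious obstacle beyond bookkeeping.

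For (i), I would first note that $\im f=\ker(\cok f)$ is by construction a kernel; since $g$ is a kernel and, by hypothesis, kernels are stable under composition, the morphism $g\circ\im f$ is again a kernel. By Lemma \ref{l1}(i) a kernel coincides with its own image, hence $g\circ\im f=\im(g\circ\im f)=\ker(\cok(g\circ\im f))$. On the other hand, Lemma \ref{l2} applied to $f$ and $g$ yields $\coker(g\circ\im f)=\coker(g\circ f)$, so passing to the kernels of these equal cokernels gives $\im(g\circ f)=\ker(\cok(g\circ f))=\ker(\cok(g\circ\im f))=g\circ\im f$, which is the assertion.

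For (ii), I would argue dually: $\coim g=\cok(\ker g)$ is a cokernel, $f$ is a cokernel by assumption, and cokernels are stable under composition, so $(\coim g)\circ f$ is a cokernel and therefore agrees with its own coimage by Lemma \ref{l1}(i). The second identity of Lemma \ref{l2}, namely $\ker((\coim g)\circ f)=\ker(g\circ f)$, then gives $\coim(g\circ f)=\cok(\ker(g\circ f))=\cok(\ker((\coim g)\circ f))=\coim((\coim g)\circ f)=(\coim g)\circ f$.

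As said, there is no real difficulty here: the statement is a formal consequence of the two preceding results. The only points requiring a little attention are applying Lemma \ref{l2} with the morphisms composed in the right order and keeping in mind — as the conventions fixed at the beginning of this section permit — that all the identities above are to be read up to the canonical isomorphisms identifying the various (co)images.
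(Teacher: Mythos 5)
Your argument is correct and coincides with the paper's own proof: both reduce $\im(g\circ f)=\ker\cok(g\circ f)$ to $\ker\cok(g\circ\im f)$ via Lemma \ref{l2}, then use that $g\circ\im f$ is a kernel together with Lemma \ref{l1}.(i), and obtain (ii) by duality. No issues.
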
\vspace{-20pt}
\begin{proof} (i) By Lemma \ref{l2} we have $\im(g\circ f)=\ker\cok(g\circ f)=\ker\cok(g\circ\ker(\cok f))$. Since $g\circ\ker\cok f$ is a kernel, Lemma \ref{l1}.(i) yields the desired equality.
Item (ii) follows by duality.
\end{proof}

\section{Main Results}\label{MainResult}\vspace{-5pt}

As already announced, Proposition \ref{t1} below contains six equivalent definitions of right semi-abelianity for a preabelian category. Let us mention at this point that the implications \textquotedblleft{}(i)$\Rightarrow$(ii)$\Rightarrow$(iii)$\Rightarrow$(iv)$\Rightarrow$(v)$\Rightarrow$(vi)\textquotedblright{} in Proposition \ref{t1} and even the equivalence of (i), (iv) and (v) were in fact established by Kuz$'$minov, {\v{C}}herevikin \cite[Lemmas 4, 5, 6 and Theorem 1]{KCh}. Moreover, Grandis \cite{Grand91-1} proved the equivalence of (i) and (vi) in the more general (non-additive) context of so-called \textquotedblleft{}ex2 categories\textquotedblright{}. In~\cite[Proposition~1]{Rump2001}, Rump proved the equivalence of (i), (iii) and (iv). Finally, the proof of the implication \textquotedblleft{}(iv)$\Rightarrow$(vii)\textquotedblright{} was given by Kopylov, Kuz$'$minov \cite{KoK00,KoK09}. In the sequel, we give a complete proof based on our preparations in Section \ref{Prep} and thus in particular 
obtain a unified version of the work cited above.
\smallskip
\\Given a commutative square
\begin{diagram}[height=1.8em,width=2em]
C             & \rTo^{g}         & D            \\
\dTo^{\alpha} & \text{\small(1)} & \dTo_{\beta} \\
A             & \rTo_{f}         & B            \\
\end{diagram}
in a preabelian category $\mathcal{A}$, we denote by $\hat\alpha\colon\Ker g\rightarrow\Ker f$ the unique morphism satisfying $\alpha\circ\ker g=(\ker f)\circ\hat\alpha$ and dually by $\hat\beta\colon\Coker g\rightarrow\Coker f$ the unique morphism satisfying $(\coker f)\circ\beta=\hat\beta\circ\coker g$. The above square will be used throughout the remainder of the section without repetition. 

\begin{prop}\label{t1} Let $\mathcal{A}$ be preabelian. Then the following are equivalent.\vspace{-12pt}
\begin{compactitem}\vspace{1pt}
\item[(i)] \hspace{-1.75pt}$\mathcal{A}$ is right semi-abelian.\vspace{1pt}
\item[(ii)] \hspace{-1.75pt}If $h\circ{}l$ is a kernel then so is $l$.\vspace{1pt}
\item[(iii)] \hspace{-1.75pt}If (1) is a pushout and $g$ is a kernel then (1) is a pullback.\vspace{1pt}
\item[(iv)] \hspace{-1.75pt}If (1) is a pushout and $g$ is a kernel then $f$ is a monomorphism.\vspace{1pt}
\item[(v)] \hspace{-1.75pt}If (1) is a pushout, $g$ is a kernel and $\beta$ is a cokernel then $f$ is a monomorphism.\vspace{1pt}
\item[(vi)] \hspace{-1.75pt}If $l$ and $h$ are kernels and $h\circ{}l$ is defined then $h\circ{}l$ is a kernel.\vspace{1pt}
\item[(vii)] \hspace{-1.75pt}If (1) is a pushout and $g$ is strict then $\hat\alpha$ is an epimorphism.
\end{compactitem}\vspace{-12pt}
\end{prop}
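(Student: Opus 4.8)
The plan is to prove the cyclic chain of implications (i)$\Rightarrow$(ii)$\Rightarrow$(iii)$\Rightarrow$(iv)$\Rightarrow$(v)$\Rightarrow$(vi)$\Rightarrow$(i), and then to treat (iv)$\Leftrightarrow$(vii) separately, since (vii) involves a strictness hypothesis rather than a kernel hypothesis. For (i)$\Rightarrow$(ii): if $h\circ l$ is a kernel, write $l=(\im l)\circ\bar l\circ\coim l$; right semi-abelianity makes $\bar l$ an epimorphism, so $l=(\im l)\circ q$ for an epimorphism $q=\bar l\circ\coim l$. Since $h\circ l$ is a kernel it equals its own image, and using Lemma \ref{l2} together with the fact that $h\circ(\im l)$ is a composite whose cokernel agrees with that of $h\circ l$, one deduces that $h\circ l = h\circ(\im l)\circ q$ is a kernel only if $q$ is already an isomorphism (a kernel that factors through an epimorphism on the right forces that epimorphism to split and then be invertible, because kernels are monomorphisms); hence $l=(\im l)\circ q$ with $q$ invertible, so $l$ is a kernel.

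For (ii)$\Rightarrow$(iii): given a pushout square (1) with $g=\ker h$ for some $h$, Lemma \ref{l1}.(iii) (dual part) says $s_G$-type cokernels pushout to cokernels, but here the relevant input is that $g$ being a kernel, together with the pushout, yields via the universal property a comparison into the pullback; one shows the canonical morphism from the pushout object to the pullback object is invertible by checking it is both a kernel (using (ii) applied to a suitable composite) and an epimorphism. For (iii)$\Rightarrow$(iv) is immediate since in a pullback square over a monomorphism-free setup the top-left-to-bottom map... more precisely, if (1) is simultaneously a pushout and a pullback and $g$ is a (mono)kernel, then Lemma \ref{l1}.(iii) gives that $f$ is a monomorphism because monomorphisms pull back to monomorphisms and $f$ appears as such a pullback. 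The implication (iv)$\Rightarrow$(v) is trivial (strengthening the hypothesis).

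For (v)$\Rightarrow$(vi): given kernels $l$ and $h$ with $h\circ l$ defined, form the pushout of $l$ along $\coker(h\circ l)\circ h$ or a similar canonical cokernel; arrange a pushout square whose top arrow is $l$ (a kernel) and whose right vertical arrow is a cokernel, so that (v) applies and produces a monomorphism, from which $h\circ l=\im(h\circ l)$ follows, i.e.\ $h\circ l$ is a kernel by Lemma \ref{l1}.(i). For (vi)$\Rightarrow$(i): apply (vi) to $l=\im f=\ker(\cok f)$ and a suitable kernel $h$; the point is that (vi) says kernels compose, hence by Corollary \ref{corl1}.(i) images behave multiplicatively, and chasing the canonical decomposition $f=(\im f)\circ\bar f\circ\coim f$ one forces $\bar f$ to be an epimorphism. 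Finally, for (iv)$\Leftrightarrow$(vii): use Lemma \ref{l1}.(ii), which says a strict $g$ factors as $g=g_1\circ g_0$ with $g_0=\coim g$ a cokernel and $g_1=\im g$ a kernel; pushing out in two stages and tracking $\hat\alpha$ through the induced maps on kernels (using the dual of Lemma \ref{l1}.(iii) for the cokernel stage and (iv) for the kernel stage) shows $\hat\alpha$ is an epimorphism, and conversely specializing $g$ to be a kernel recovers (iv).

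I expect the main obstacle to be the implications (ii)$\Rightarrow$(iii) and (v)$\Rightarrow$(vi), where one must construct the right auxiliary pushout square and then verify that the canonical comparison morphism is invertible; the verification that a kernel which factors through an epimorphism forces that epimorphism to be invertible is the recurring technical lemma, and getting the bookkeeping right between $\im$, $\coim$, and the comparison maps $\hat\alpha$, $\hat\beta$ via Lemmas \ref{l1} and \ref{l2} and Corollary \ref{corl1} is where the care is needed.
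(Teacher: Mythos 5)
Your overall architecture --- the cycle (i)$\Rightarrow$(ii)$\Rightarrow\cdots\Rightarrow$(vi)$\Rightarrow$(i) together with (iv)$\Leftrightarrow$(vii) via the strict factorization $g=g_1\circ g_0$ --- coincides with the paper's, and your steps (i)$\Rightarrow$(ii) and (vi)$\Rightarrow$(i) are essentially the arguments given there. But three of your steps have concrete problems. In (iii)$\Rightarrow$(iv) you invoke ``monomorphisms pull back to monomorphisms,'' but Lemma \ref{l1}.(iii) gives that the \emph{pulled-back} arrow inherits mono-ness, i.e.\ $f$ mono implies $g$ mono; in square (1) the morphism $f$ is not exhibited as a pullback of $g$, so your citation runs in the wrong direction. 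The implication is nevertheless true, by a direct universal-property chase: if $f\circ x=0$, then $(x,0)$ is a cone over the cospan $(f,\beta)$, hence $x=\alpha\circ y$ with $g\circ y=0$; $g$ mono forces $y=0$ and so $x=0$.

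In (ii)$\Rightarrow$(iii) you propose to show that the comparison morphism into the pullback is ``a kernel and an epimorphism,'' but you give no argument for the epimorphism half, and that is not an easy step. The efficient route avoids the comparison morphism entirely: the pushout property says $[f,-\beta]$ is the cokernel of $(\alpha,g)\colon C\rightarrow A\oplus D$; composing $(\alpha,g)$ with the projection onto $D$ yields the kernel $g$, so (ii) makes $(\alpha,g)$ itself a kernel, and Lemma \ref{l1}.(i) then identifies $(\alpha,g)$ with $\ker[f,-\beta]$, which is exactly the pullback property. Finally, in (v)$\Rightarrow$(vi) the proposed ``pushout of $l$ along $\coker(h\circ l)\circ h$'' does not typecheck, since the two legs of a span must share a domain. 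The correct square is the pushout of the kernel $h$ along $\coker l$; Lemma \ref{l1}.(iii) identifies the pushed-out arrow as $\coker(h\circ l)$, (v) makes the fourth arrow a monomorphism, and one still needs a genuine diagram chase, using $h=\ker(\coker h)$ and $l=\ker(\coker l)$, to verify that $h\circ l$ is the kernel of $\coker(h\circ l)$; ``from which it follows'' hides the real work. A similar level of detail is missing in (iv)$\Rightarrow$(vii), where proving $\hat\alpha$ epi requires an auxiliary pushout along $\ker f$ rather than mere bookkeeping.
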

\begin{proof}\textquotedblleft{}(i)$\Rightarrow$(ii)\textquotedblright{} Suppose that $h\circ{}l$ is a kernel. Let $l=i\circ p$ with $i$ a kernel and $p$ an epimorphism. Put $\alpha=\coker(h\circ{}l)$, then $h\circ{}l=\ker\alpha$ by Lemma \ref{l1}.(i). It is easy to check that $h\circ{}i$ is a monomorphism and thus the kernel of $\alpha$. Therefore, $p$ is an isomorphism and consequently $l$ is a kernel.
\smallskip
\\\textquotedblleft{}(ii)$\Rightarrow$(iii)\textquotedblright{} Suppose that (1) is a pushout and $g$ is a kernel. Then $\cb{f,}{-\beta}=\cok\bc{\alpha}{g}$ and $\bc{\alpha}{g}$ is a kernel by hypothesis. Lemma \ref{l1}.(i) provides $\bc{\alpha}{g}=\ker\,\cb{f,}{-\beta}$. Hence, the square (1) is a pullback.
\smallskip
\\\textquotedblleft{}(iii)$\Rightarrow$(iv)\textquotedblright{} Suppose that (1) is a pushout and $g$ is a kernel. Then (1) is a pullback by hypothesis. If $x$ is a morphism with $f\circ x=0$ then the fact that the square is a pullback implies the existence of a morphism $y$ with $\alpha\circ{}y=x$ and $g\circ{}y=0$. Since $g$ is a monomorphism, $y=0$ holds. Consequently, $x=0$ and $f$ is a monomor-\linebreak{}phism.
\smallskip
\\\textquotedblleft{}(iv)$\Rightarrow$(v)\textquotedblright{} Trivial. 
\smallskip
\\\textquotedblleft{}(v)$\Rightarrow$(vi)\textquotedblright{} Put $\alpha =\coker l$, $\delta =\coker h$ and consider a pushout $\gamma\circ{}\alpha =\beta\circ{}h$. By Lemma \ref{l1}.(iii), $\beta$ is a cokernel. It is easy to check that $\beta=\coker(h\circ{}l)$. By hypothesis, $\gamma$ is a monomorphism and we have to show that $h\circ{}l=\ker\beta$. Let $\lambda$ be a morphism such 
that $\beta\circ{}\lambda=0$. Consider the diagram
\begin{diagram}[height=1.9em,width=2em]
      &                                       & \cdot         &                                        &              &                                                      &       \\
      & \ldDash(2,2)^{\rotatebox{-35}{$\nu$}} & \dDash^{\mu}  & \rdTo(2,2)^{\rotatebox{35}{$\lambda$}} &              &                                                      &       \\
\cdot & \rTo_l                                & \cdot         & \rTo_h                                 & \cdot        & \rTo^{\delta}                                        & \cdot \\
      &                                       & \dTo^{\alpha} &                                        & \dTo_{\beta} & \ruTo(2,2)_{\;\;\rotatebox{-43}{$\cok\gamma$}\!\!\!} &       \\
      &                                       & \cdot         & \rTo_{\gamma}                          & \cdot        &                                                      &       \\
\end{diagram}
where the arrows $\nu$ and $\mu$ are introduced below and the relation $\delta=(\cok\gamma)\circ\beta$ results from Lemma \ref{l1}.(iii). By the above, $\beta\circ{}\lambda=0$ implies $\delta\circ{}\lambda=0$. Lemma \ref{l1}.(i) implies $h=\ker\delta$ and therefore there is a morphism $\mu$ with $h\circ{}\mu=\lambda$. Since $\gamma\circ{}\alpha\circ{}\mu =\beta\circ{}\lambda=0$ and $\gamma$ is a monomorphism, we obtain $\alpha\circ{}\mu=0$. Lemma \ref{l1}.(i) yields $l=\ker\alpha$ and hence there exists a morphism $\nu$ such that $l\circ\nu =\mu$. Now, $h\circ{}l\circ{}\nu =h\circ{}\mu=\lambda$ and since $l$ and $h$ are monomorphisms, $\nu$ is unique with this property. This proves that $h\circ{}l=\ker\beta$, i.e.~$h\circ{}l$ is a kernel.
\smallskip
\\\textquotedblleft{}(vi)$\Rightarrow$(i)\textquotedblright{} Let kernels be stable under composition and let $h$ be a morphism with the canonical decomposition $h=(\im h)\circ\bar{h}\circ\coim h$. Corollary \ref{corl1}.(i) yields $\im h=(\im h)\circ\im(\bar{h}\circ\coim h)=(\im h)\circ\im\bar{h}$. This implies that $\im\bar{h}=\id$, i.e.~$\bar{h}$ is an epimorphism.
\smallskip
\\\textquotedblleft{}(iv)$\Rightarrow$(vii)\textquotedblright{} Assume that $g=g_1\circ{}g_0$ is a representation of the strict morphism $g$ with $g_1$ a kernel
and $g_0$ a cokernel. Consider the pushout
\begin{diagram}[height=1.8em,width=2em]
C             & \rTo^{g_0}            & A'            \\
\dTo^{\alpha} & \scriptstyle\text{PO} & \dTo_{\beta'} \\
A             & \rTo_{f_0}            & B'            \\
\end{diagram}
and compute $\beta\circ{}g_1\circ{}g_0=f\circ{}\alpha$. Thus, there exists a unique morphism $f_1\colon B'\rightarrow B$ with $f=f_1\circ{}f_0$ and $\beta\circ{}g_1=f_1\circ{}\beta'$. It is easy to see (cf.~\cite[Lemma 5.1]{Kelly1969}) that
\begin{diagram}[height=1.8em,width=2em]
A'             & \rTo^{g_1}            & D            & \\
\dTo^{\beta'}  & \scriptstyle\text{PO} & \dTo_{\beta} &\hspace{-7pt} \\
B'             & \rTo_{f_1}            & B            & \\
\end{diagram}
is again a pushout and hence $f_1$ is a monomorphism. By hypothesis, $g_1$ is a monomorphism. Let $x\colon\Ker f\rightarrow X$ be a morphism such that $x\circ\hat\alpha=0$. In the pushout
\begin{diagram}[height=1.9em,width=1.8em]
\Ker f        & \rTo^{x}                   & X          & \\
\dTo^{\ker f} & \scriptstyle\text{\!PO\;}  & \dTo_{z_2} &\hspace{22pt} \\
A             & \rTo_{z_1}                 & Z          & \\
\end{diagram}
we have $z_1\circ\alpha\circ\ker g=z_1\circ(\ker f)\circ\hat\alpha = z_2\circ{}x\circ{}\hat\alpha=0$ and $g_0=\coker\ker g$ by Lemma \ref{l1}.(ii). Thus, there exists a unique morphism $w\colon A'\rightarrow Z$ such that $z_1\circ\alpha=w\circ{}g_0$. Since $\beta'\circ{}g_0=f_0\circ{}\alpha$ is a pushout, there is a unique morphism $\sigma\colon B'\rightarrow Z$ such that $w=\sigma\circ{}\beta'$ and $z_1=\sigma\circ{}f_0$. But now $z_2\circ{}x=z_1\circ{}\ker f=\sigma\circ{}f_0\circ{}\ker f=0$ because $0=f\circ\ker f=f_1\circ{}f_0\circ\ker f$ and $f_1$ is a monomorphism. Since $\ker f$ is a kernel, $z_2$ is a monomorphism, whence $x=0$ and $\hat\alpha$ is an epimorphism.
\smallskip
\\\textquotedblleft{}(vii)$\Rightarrow$(iv)\textquotedblright{} If (1) is a pushout with $g$ a kernel then $(\ker f)\circ\hat\alpha = \alpha\circ\ker g=0$. Since $\hat\alpha$ is an epimorphism, this yields $\ker f=0$, i.e.~$f$ is a monomorphism.
\end{proof}

\vspace{-10pt}
We can now formulate the statement dual to Proposition \ref{t1}.

\begin{prop}\label{t2} Let $\mathcal{A}$ be preabelian. Then the following are equivalent.\vspace{-11pt}
\begin{compactitem}\vspace{1pt}
\item[(i$'$)\hspace{-1pt}] $\mathcal{A}$ is left semi-abelian.\vspace{1pt}
\item[(ii$'$)\hspace{-1pt}] If $h\circ l$ is a cokernel then so is $h$.\vspace{1pt}
\item[(iii$'$)\hspace{-1pt}] If (1) is a pullback and $f$ is a cokernel then (1) is a pushout.\vspace{1pt}
\item[(iv$'$)\hspace{-1pt}] If (1) is a pullback and $f$ is a cokernel then $g$ is an epimorphism.\vspace{1pt}
\item[(v$'$)\hspace{-1pt}] If (1) is a pullback, $f$ is a cokernel and $\alpha$ is a kernel then $g$ is an epimorphism.\vspace{1pt}
\item[(vi$'$)\hspace{-1pt}] If $l$ and $h$ are cokernels and $h\circ{}l$ is defined then $h\circ{}l$ is a cokernel.
\item[(vii$'$)] \hspace{-1.75pt}If (1) is a pullback and $f$ is strict then $\hat\beta$ is a monomorphism.\hfill\qed
\end{compactitem} 
\end{prop}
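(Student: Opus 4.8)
The plan is to deduce Proposition \ref{t2} from Proposition \ref{t1} by passing to the opposite category $\mathcal{A}^{\mathrm{op}}$, which is again preabelian. The key observation is that $\mathcal{A}$ is left semi-abelian precisely when $\mathcal{A}^{\mathrm{op}}$ is right semi-abelian: in $\mathcal{A}^{\mathrm{op}}$ the canonical decomposition $f=(\im f)\circ\bar f\circ\coim f$ becomes the canonical decomposition of $f$ with $\im f$ and $\coim f$ interchanged and the middle morphism $\bar f$ unchanged (though with its direction reversed), so $\bar f$ is a monomorphism in $\mathcal{A}$ if and only if it is an epimorphism in $\mathcal{A}^{\mathrm{op}}$. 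Thus (i$'$) for $\mathcal{A}$ is (i) for $\mathcal{A}^{\mathrm{op}}$, and it remains to match (ii$'$)--(vii$'$) with (ii)--(vii).

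To do this I would record the evident dictionary: reversing the direction of all arrows interchanges kernels with cokernels, monomorphisms with epimorphisms and pushouts with pullbacks, and it reverses the order of a composite, so that $h\circ l$ becomes a composite with its factors in the opposite order. Under this passage the square (1) goes to the square (1) again, after relabelling the vertices and exchanging the pair $(\alpha,f)$ with the pair $(\beta,g)$; in particular the comparison morphism $\hat\alpha$ of the original square corresponds to the comparison morphism $\hat\beta$ of the dual square, and by Lemma \ref{l1}.(ii) strictness is a self-dual property, since ``cokernel followed by kernel'' dualizes to ``kernel preceded by cokernel''. Going through the list one sees that (ii$'$), (iii$'$), (iv$'$), (v$'$), (vi$'$) and (vii$'$) for $\mathcal{A}$ are, respectively, exactly (ii), (iii), (iv), (v), (vi) and (vii) for $\mathcal{A}^{\mathrm{op}}$. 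The equivalences then follow at once by applying Proposition \ref{t1} to $\mathcal{A}^{\mathrm{op}}$.

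I do not expect a genuine obstacle here; the argument is pure bookkeeping. The only points demanding care are that the square (1) really is self-dual with $\alpha\leftrightarrow\beta$ and $f\leftrightarrow g$, so that the one-sided hypotheses ``$\alpha$ is a kernel'' in (v$'$) and ``$f$ is strict'' in (vii$'$) dualize to ``$\beta$ is a cokernel'' in (v) and ``$g$ is strict'' in (vii); and that the comparison morphisms $\hat\alpha$ and $\hat\beta$ are identified with one another under the duality, which is immediate from their defining equations $\alpha\circ\ker g=(\ker f)\circ\hat\alpha$ and $(\coker f)\circ\beta=\hat\beta\circ\coker g$. Once these identifications are recorded, no further computation is needed.
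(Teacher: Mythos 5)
Your proposal is correct and is precisely the paper's own argument: the authors state Proposition \ref{t2} as ``the statement dual to Proposition \ref{t1}'' and close it with a \qed, i.e.\ they invoke exactly the opposite-category bookkeeping you spell out. Your dictionary (kernels $\leftrightarrow$ cokernels, $(\alpha,f)\leftrightarrow(\beta,g)$, $\hat\alpha\leftrightarrow\hat\beta$, strictness self-dual) is accurate, so there is nothing to add.
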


\vspace{-10pt}
Combining Propositions \ref{t1} and \ref{t2}, we obtain the following characterization of semi-abelianity.

\begin{thm}\label{t3} Let $\mathcal{A}$ be a preabelian category $\mathcal{A}$. Then $\mathcal{A}$ is semi-abelian if and only if it satisfies one of conditions \ref{t1}.(i)--(vii) and one of conditions \ref{t2}.(i$'$)--(vii$'$).\hfill\qed
\end{thm}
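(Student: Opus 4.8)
The plan is to reduce the statement directly to the definition of semi-abelianity together with Propositions \ref{t1} and \ref{t2}; essentially no new argument is required. Recall from Section \ref{Prep} that $\mathcal{A}$ is semi-abelian precisely when $\bar f$ is a bimorphism for every morphism $f$, and that by definition this means exactly that $\mathcal{A}$ is left semi-abelian and right semi-abelian simultaneously. So the whole task is to translate each of the two one-sided conditions into the corresponding list of equivalent properties.

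First I would observe that, by Proposition \ref{t1}, conditions \ref{t1}.(i)--(vii) are pairwise equivalent; hence $\mathcal{A}$ satisfies \emph{one} of them if and only if it satisfies \emph{all} of them, and this in turn holds if and only if $\mathcal{A}$ is right semi-abelian (namely condition \ref{t1}.(i)). Dually, by Proposition \ref{t2}, $\mathcal{A}$ satisfies one of \ref{t2}.(i$'$)--(vii$'$) if and only if it satisfies all of them, equivalently if and only if $\mathcal{A}$ is left semi-abelian. Combining these two observations, $\mathcal{A}$ satisfies one of \ref{t1}.(i)--(vii) and one of \ref{t2}.(i$'$)--(vii$'$) if and only if it is both right and left semi-abelian, which is exactly the definition of semi-abelianity; this establishes both implications at once.

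There is no genuine obstacle here: the entire content of the theorem sits in Propositions \ref{t1} and \ref{t2}, and the present statement is merely their conjunction, repackaged through the definition ``semi-abelian $=$ left semi-abelian $+$ right semi-abelian''. The only point worth spelling out in the write-up is the upgrade from ``satisfies one of the conditions'' to ``satisfies all of the conditions'', which is legitimate precisely because each of the two propositions asserts the mutual equivalence of its entire list.
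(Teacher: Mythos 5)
Your proposal is correct and matches the paper's own (implicit) argument exactly: the theorem is stated with a bare \qed precisely because, as you say, it is the conjunction of Propositions \ref{t1} and \ref{t2} combined with the definition ``semi-abelian $=$ left semi-abelian $+$ right semi-abelian''. Nothing further is needed.
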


\vspace{-15pt} 

\section{Examples}\label{Examples}\vspace{-10pt}

The results of Section \ref{MainResult} trigger the question if there are natural examples of categories which are right semi-abelian but not left semi-abelian and vice versa. In this section we present two examples of this type and then reclassify these examples in the context of left and right quasi-abelian categories in the sense of Rump \cite[p.~168]{Rump2001}.
\smallskip
\\In what follows, by a \textquotedblleft{}space\textquotedblright{} we mean a locally convex space whose topology is not necessarily Hausdorff. The category formed by all these spaces with linear and continuous maps as morphisms is denoted by LCS. For notation concerning the theory of locally convex spaces we refer to \cite{FloretWloka, Jarchow, BPC}. The full subcategory of Hausdorff spaces is denoted by HD-LCS. Our first example is the category HD-BOR of Hausdorff locally convex spaces that are bornological (cf.~\cite[Example 4.2]{SiegWegner2009}), i.e.~a linear map is continuous whenever it is bounded on bounded sets. Every metrizable space is bornological and this class is stable under the formation of quotients and locally convex inductive limits. Let BOR be the full subcategory of LCS consisting of bornological spaces (cf.~\cite[\textsection{}\,23, 1.5 and \textsection{}\,11, 2.]{FloretWloka}). BOR is additive and by \cite[\textsection{}\,23, 2.9]{FloretWloka} for a morphism $f\colon E\rightarrow F$ 
in BOR the cokernel is the space $F/f(E)$, i.e.~the cokernels are the same as in LCS. The kernel of $f$ is the space $f^{-1}(0)^{\text{BOR}}$, i.e.~the linear space $f^{-1}(0)$ endowed with the bornological topology associated with the topology induced by $E$, see \cite[\textsection{}\,11, 2.2]{FloretWloka}. In \cite[Example 4.1]{SiegWegner2009}, it was pointed out that BOR is an example of a category which is semi-abelian but not quasi-abelian; the first property is easy to check in view of the above, whereas the absence of quasi-abelianity follows from an example due to Bonet, Dierolf \cite{BonetDierolf2005}.
\smallskip
\\Let us now start with an example of a category which is left semi-abelian but not right semi-abelian. The following relies on \cite[Example 4.2]{SiegWegner2009}.

\begin{ex}\label{HD-BOR} By HD-BOR we denote the full subcategory of HD-LCS consisting of bornological spaces. Clearly, HD-BOR is a full subcategory of BOR. From \cite[Example 4.1]{SiegWegner2009} it follows that the kernels in HD-BOR are those of BOR. This is not true for cokernels because a quotient space is Hausdorff if and only if the corresponding subspace is closed. Let $f\colon E\rightarrow F$ be a morphism in HD-BOR. Then the cokernel of $f$ is the space $F/\overline{f(E)}$ endowed with the quotient topology. Hence, the coimage of $f$ is the space $E/f^{-1}(0)$ endowed with the quotient topology and the image of $f$ is $\overline{f(E)}^{\scriptscriptstyle\text{BOR}}$, the closure of $f(E)$ in $F$ but endowed with the finer associated bornological topology (it is thus not clear whether $f(E)$ is still dense in this space).
\smallskip
\\Using an example due to Grothendieck \cite{Grothendieck1954} (cf.~Bonet, P\'{e}rez Carreras \cite[8.6.12]{BPC}) it was shown in \cite[Example 4.2]{SiegWegner2009} that there is a morphism $f\colon E\rightarrow F$ in HD-BOR such that $\bar{f}\colon \coim f\rightarrow \im f$ is not an epimorphism, whence HD-BOR is not right semi-abelian. On the other hand, for each morphism $f$ the induced morphism $\bar{f}$ is easily seen to be injective whence $\bar{f}$ is a monomorphism and therefore HD-BOR is left semi-abelian.
\end{ex}
\vspace{-10pt}
Let us note that the forementioned example of Grothendieck arises in the framework of so-called well-located and limit subspaces. These properties of subspaces of locally convex inductive limits arise naturally if one deals with the question whether partial differential or convolution operators on some space are surjective. We refer to Floret \cite{Floret} for precise definitions, explanations and further references.
\smallskip
\\The next example gives a category which is right semi-abelian but not left semi-abelian; our methods strongly rely on Sieg \cite{SiegThesis}.

\begin{ex}\label{HD-COM} Let HD-COM be the full subcategory of HD-LCS consisting of the complete locally convex spaces. Since finite products of complete spaces are again complete, HD-COM is additive. For a morphism $f\colon E\rightarrow F$ in HD-COM, the kernel is the space $f^{-1}(0)$ endowed with the topology induced by $E$; since we are dealing with Hausdorff spaces, $f^{-1}(0)$ is closed in $E$ and hence complete. The cokernel of $f$ is the Hausdorff completion $C(F/f(E))$, i.e.~the completion of $F/\bar{f(E)}$. Therefore, the cokernel of the kernel of $f$ is the space $C(E/f^{-1}(0))$ and the kernel of the cokernel is the space $\bar{f(E)}$ endowed with the topology induced by $F$. It is easy to see that the map $\bar{f}\colon C(E/f^{-1}(0))\rightarrow\bar{f(E)}$ has dense range and is thus an epimorphism for each morphism $f$ in HD-COM. Therefore, HD-COM is right semi-abelian. On the other hand, one can copy the proof of Sieg \cite[Proposition 3.1.6]{SiegThesis} verbatim to construct a morphism $f$ 
such that $\bar{f}$ is not a monomorphism. This shows that HD-COM is not left semi-abelian.
\end{ex}
\vspace{-14pt}
As announced at the beginning of this section, we finish by reclassifying the two examples in the context of (left and right) quasi-abelian categories. For these notions, the reader is referred to \cite{Schneiders1999} and \cite{Rump2001}; note that Rump uses the term \textquotedblleft{}almost abelian\textquotedblright{} instead of \textquotedblleft{}quasi-abelian\textquotedblright{}. Using \cite[Corollary 1.2 and Proposition 1.3]{Rump2001}, we obtain a whole zoo of implications among the notions discussed so far, see Fig.~1.

\begin{center}
\begin{picture}(322,120)(0,0)
\put(0,0){\psfig{figure=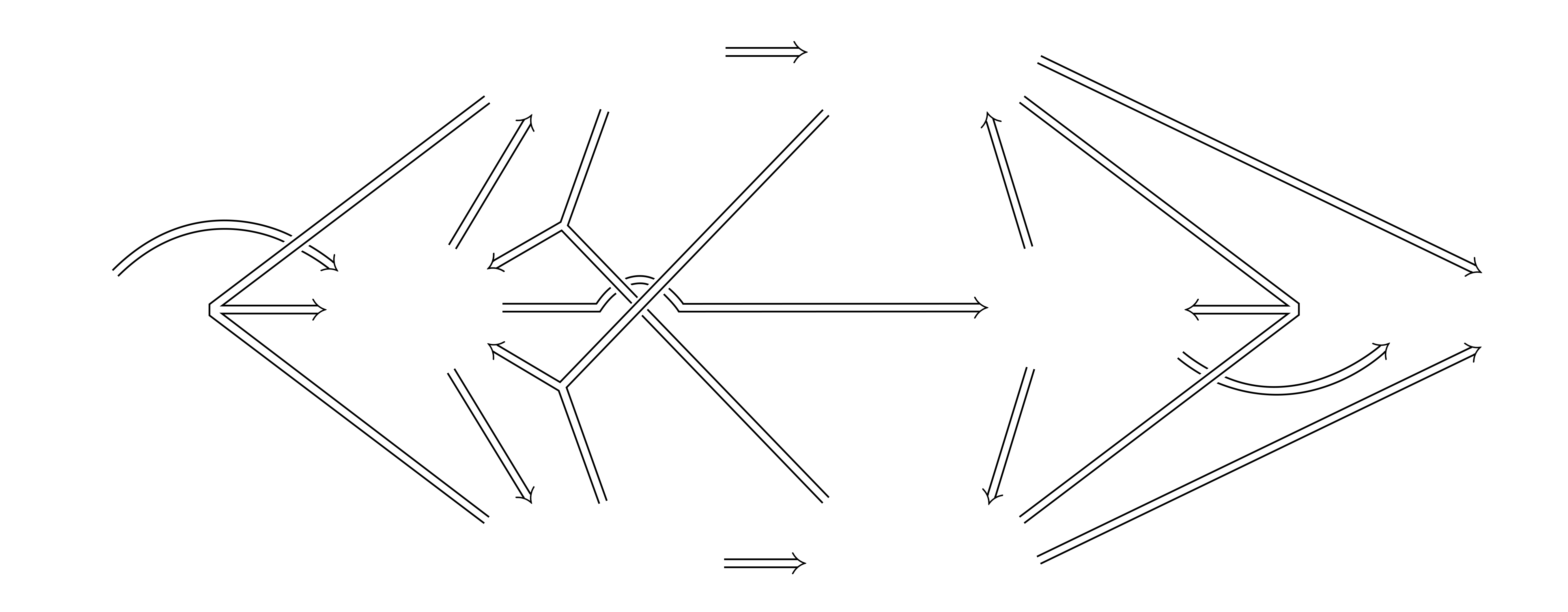, width=11.5cm}}
         \put(107.5,119){\footnotesize left quasi-}                                 \put(176,119){\footnotesize left semi-}
         \put(113,110){\footnotesize abelian}                                       \put(178.8,110){\footnotesize abelian}

                       \put(75,64.7){\footnotesize quasi-}                \put(217,64.7){\footnotesize semi-}

\put(4.4,60.5){\footnotesize abelian}                                                                \put(283,60.5){\footnotesize preabelian}

                       \put(72,56){\footnotesize abelian}                  \put(213,56){\footnotesize abelian}

         \put(106,13){\footnotesize right quasi-}                                   \put(172.5,13){\footnotesize right semi-}
         \put(113,3.5){\footnotesize abelian}                                       \put(178.8,3.5){\footnotesize abelian}
\put(32,-15){\small Figure 1: Zoo of properties between abelian and preabelian.}
\end{picture}
\end{center}
\smallskip
It is well known that there are categories which are quasi-abelian but not abelian: For instance the category LCS, the category HD-LCS, the category of Banach spaces or the category of Fr\'{e}chet spaces are quasi-abelian but not abelian.  We refer to Prosmans \cite{Prosmans2000} for more details.
\smallskip
\\The question if the notions of quasi-abelian and semi-abelian categories coincide in general is known as Raikov's Conjecture and was solved in the negative by the counterexample in \cite{BonetDierolf2005} which shows that BOR is semi-abelian but not quasi-abelian (cf.~our remarks before Example \ref{HD-BOR} and \cite[Example 4.1]{SiegWegner2009}). Rump \cite{Rump2007} gave another counterexample in the context of representation theory. In fact, the category BOR also demonstrates that the implications \textquotedblleft{}left quasi-abelian $\Rightarrow$ left semi-abelian\textquotedblright{} and  \textquotedblleft{}right quasi-abelian $\Rightarrow$ right semi-abelian\textquotedblright{} can in general not be reverted: the counterexample of Bonet, Dierolf \cite{BonetDierolf2005} in fact shows that BOR is not left quasi-abelian. On the other hand, it follows directly that BOR also fails to be right quasi-abelian (assume that this is true, then \cite[Proposition 1.3]{Rump2001} would imply that BOR is even quasi-
abelian). Examples \ref{HD-BOR} and \ref{HD-COM} show that all three implications starting at \textquotedblleft{}semi-abelian\textquotedblright{} and also the three implications 
ending at \textquotedblleft{}preabelian\textquotedblright{} cannot be reverted. 
\medskip
\\Let us point out that the category of PLS-spaces studied by Sieg \cite{SiegThesis} has properties similar to those of HD-COM and that the former category has important applications in analysis (see \cite{SiegThesis} for details).
\smallskip
\\To conclude, let us mention that it recently turned out that the category of LB-spaces, i.e.~countable inductive limits of Banach spaces, is an example for a category which is left quasi-abelian but not right quasi-abelian (and thus necessarily not semi-abelian). The latter was discovered by Dierolf \cite{BD} together with the second-named author.

\begin{ack}{\small Both authors would like to thank the referee for several useful comments which helped to improve this article. The first-named author was partially supported by the Russian Foundation for Basic Research (Grant 09-01-00142-a), the State Maintenance Program for the Leading Scientific Schools and Junior Scientists of the Russian Federation (NSh-6613.2010.1), and the Integration Project \textquotedblleft{}Quasiconformal Analysis and Geometric Aspects of Operator Theory\textquotedblright{} of the Siberian Branch and the Far East Branch of the Russian Academy of Sciences.}
\end{ack}

\small

\normalsize

{\bf \textsf{Authors' Addresses:}}
\smallskip
\\{\small Ya.~Kopylov, Sobolev Institute of Mathematics, Pr.~Akademika Koptyuga 4, 630090 Novosibirsk, RUSSIA, e-mail: yakop@math.nsc.ru.
\smallskip
\\S.-A.~Wegner, Fachbereich C, Bergische Universit\"at Wuppertal, Gau\ss{}stra\ss{}e 20, D-42097 Wuppertal, GERMANY, e-mail: wegner@math.uni-wuppertal.de.}

\end{document}